%% This is file `elsarticle-template-1-num.tex',
%%
%% Copyright 2009 Elsevier Ltd
%%
%% This file is part of the 'Elsarticle Bundle'.
%% ---------------------------------------------
%%
%% It may be distributed under the conditions of the LaTeX Project Public
%% License, either version 1.2 of this license or (at your option) any
%% later version.  The latest version of this license is in
%%    http://www.latex-project.org/lppl.txt
%% and version 1.2 or later is part of all distributions of LaTeX
%% version 1999/12/01 or later.
%%
%% Template article for Elsevier's document class `elsarticle'
%% with numbered style bibliographic references
%%
%% $Id: elsarticle-template-1-num.tex 149 2009-10-08 05:01:15Z rishi $
%% $URL: http://lenova.river-valley.com/svn/elsbst/trunk/elsarticle-template-1-num.tex $
%%
\documentclass[preprint,12pt]{elsarticle}

%% Use the option review to obtain double line spacing
%% \documentclass[preprint,review,12pt]{elsarticle}

%% Use the options 1p,twocolumn; 3p; 3p,twocolumn; 5p; or 5p,twocolumn
%% for a journal layout:
%% \documentclass[final,1p,times]{elsarticle}
%% \documentclass[final,1p,times,twocolumn]{elsarticle}
%% \documentclass[final,3p,times]{elsarticle}
%% \documentclass[final,3p,times,twocolumn]{elsarticle}
%% \documentclass[final,5p,times]{elsarticle}
%% \documentclass[final,5p,times,twocolumn]{elsarticle}

%% The graphicx package provides the includegraphics command.
\usepackage{graphicx}
%% The amssymb package provides various useful mathematical symbols
\usepackage{amssymb}
%% The amsthm package provides extended theorem environments
%% \usepackage{amsthm}

%% The lineno packages adds line numbers. Start line numbering with
%% \begin{linenumbers}, end it with \end{linenumbers}. Or switch it on
%% for the whole article with \linenumbers after \end{frontmatter}.
%\usepackage{lineno}

\usepackage[all]{xy}
\usepackage{amsmath,amsfonts,amssymb,amscd,amsthm,xspace}
\usepackage[utf8]{inputenc}
\usepackage{relsize}
\newtheorem{theorem}{Theorem}
\newtheorem{definition}{Definition}

\newtheorem{remark}{Remark}
\newtheorem{corollary}{Corollary}
% \bibliographystyle{alpha} 
%% natbib.sty is loaded by default. However, natbib options can be
%% provided with \biboptions{...} command. Following options are
%% valid:

%%   round  -  round parentheses are used (default)
%%   square -  square brackets are used   [option]
%%   curly  -  curly braces are used      {option}
%%   angle  -  angle brackets are used    <option>
%%   semicolon  -  multiple citations separated by semi-colon
%%   colon  - same as semicolon, an earlier confusion
%%   comma  -  separated by comma
%%   numbers-  selects numerical citations
%%   super  -  numerical citations as superscripts
%%   sort   -  sorts multiple citations according to order in ref. list
%%   sort&compress   -  like sort, but also compresses numerical citations
%%   compress - compresses without sorting
%%
%% \biboptions{comma,round}

% \biboptions{}

\journal{Journal Name}

\begin{document}

\begin{frontmatter}

%% Title, authors and addresses

\title{A Shear Stress Reynolds' Limit Formula}

%% use the tnoteref command within \title for footnotes;
%% use the tnotetext command for the associated footnote;
%% use the fnref command within \author or \address for footnotes;
%% use the fntext command for the associated footnote;
%% use the corref command within \author for corresponding author footnotes;
%% use the cortext command for the associated footnote;
%% use the ead command for the email address,
%% and the form \ead[url] for the home page:
%%
%% \title{Title\tnoteref{label1}}
%% \tnotetext[label1]{}
 \author{C. V. Valencia-Negrete \fnref{label2}}
 \ead{carla.valencia@ibero.mx}
%% \ead[url]{home page}
 \fntext[label2]{Universidad Iberoamericana A. C.}
%% \cortext[cor1]{}
 \address{Prolongaci\'{o}n Paseo de la Reforma 880,
              Mexico City ---01219, MEXICO \fnref{label2}}
%% \fntext[label3]{}

%% use optional labels to link authors explicitly to addresses:
%% \author[label1,label2]{<author name>}
%% \address[label1]{<address>}
%% \address[label2]{<address>}

%\author{C. V. Valencia-Negrete}

%\address{Universidad Iberoamericana A. C.\at
%              Prolongaci\'{o}n Paseo de la Reforma 880,
%              Mexico City ---01219, Mexico\\
%              \email{carla.valencia@ibero.mx}  \\
%                ORCID Id. 0000-0002-8067-5221\\}

\begin{abstract}
%% Text of abstract
Historically, meteorological and climate studies have been prompted by the need for understanding
precipitation to have better logistics in food production. Despite all efforts, 
nonlinearity in atmosphere dynamics is still a source of uncertainty.
On the other hand, aeronautical science studies the boundary layer separation
through the \emph{shear stress}.
In this work, a mathematical interpretation of methods in classical aerodynamics theory in terms of
successive layers of \emph{diffeomorphisms} over \emph{Lipschitz domains}  
allows us to estimate the boundary layer's
\emph{shear stress}, $\tau^{*}_d$ and $\tau^{*}_m$, in dry and humid atmospheric conditions
without assuming that there is not a
convective derivative term in the conservation of momentum equation 
or that the gaseous boundary layer is incompressible:
\[
\tau^{*}_d = \frac{U}{h}\ \left(1-\frac{U^2}{2c_{pd}\ T_0}\right)^{19/25},
\hspace{7pt}
\tau^{*}_m = \frac{U}{h}\ \left(1-\frac{U^2}{2c_{pm}\ T_0}\right)^{19/25},\]
where $h$ is the boundary layer's height, 
$T_0$ is the surface temperature,
$U$ is the \emph{free stream velocity};
$c_{pd}$ is the \emph{specific heat at constant pressure for dry air}
and $c_{pm}$ is the \emph{specific heat at constant pressure for moist air}.
Furthermore, 
if $\hat{R}_m$ is a \emph{gas constant for moist air}
and $p_0$ is the pressure at the surface,
the density
$\rho
 \hspace{2pt}  \cong \hspace{2pt}
      p_0 \hspace{2pt} T_0^{\frac{2b}{b-1}-1}
     \hspace{2pt} \hat{R}_{m}^{-1} \hspace{2pt}
      \left[1-\left(U^2/2c_{ph}T_0\right)\right]^{\frac{b}{(b-1)}-1}$
      for $b=1.405$.
Moreover, this opens the possibility of finding a different deterministic family of atmosphere natural convection models.
\end{abstract}

\begin{keyword}
Gas dynamics \sep Boundary-layer theory \sep Reynolds' Limit Formula 
%% keywords here, in the form: keyword \sep keyword

%% MSC codes here, in the form: \MSC code \sep code
\MSC[2010] 35Q30 \sep 76N15 \sep 76N20

\end{keyword}

\end{frontmatter}

%%
%% Start line numbering here if you want
%%
%\linenumbers

%% main text
\section{Introduction}
\label{S:1}

This work suggests that a shear stress value obtained from limit formulas where
the deterministic model is approximated but not oversimplified would give us a way to 
better evaluate the conditions under which there is an atmospheric boundary layer separation, and thus, 
convective clouds formation. 
Previous work by the author showed that a Reynolds' Limit Formula could be deduced
from Dorodnitzyn's gaseous boundary layer model to overcome
the inherited Navier-Stokes non-linearity in its convective derivative term \cite{val08}
with the application of Bayada and Chambat's diffeomorphism  \cite{Bayada1986}.
For more details on the statement of the small parameter problem in terms of an incompressible field, see \cite{val08}.

Dorodnitzyn reduced the original system of seven equations for seven variables to a quasi-linear problem
for a transformation of the shear stress in a new domain. 
This work offers a mathematical proof of Dorodnitzyn's deduction in Theorem~\ref{theo:22}.
Surely, there exists a mathematical formalization preceding from the one given here, but the author could not find it in the literature.
This might be a consequence of the fact that Dorodnitzyn's work of the subsequent years is \emph{partially classified} \cite[p. 1973]{kerimov10}.

From the small parameter point of view, in $1886$, Reynolds published:
``\emph{On the Theory of Lubrication and Its Application to Mr. Beauchamp Tower's
Experiments, Including an Experimental Determination of the Viscosity of Olive Oil}"
\cite{Reynolds1886}, where he gave the formula to study fluid behavior when it moves in a narrow space between two plates.
Reynolds' Limit Formula was effectively used for a hundred years before
Guy Bayada and Mich\`{e}le Chambat \cite{Bayada1986}
formally proved that this was indeed a limit formula for the Stokes' Equations 
when the small parameter of a proportion of the boundary layer's
height and its length tend to zero, at $1986$. 
In $2009$, Chupin and Sart demonstrated that
that the compressible Reynolds equation is an approximation of compressible
Navier-Stokes equations  \cite{ChupinSart}. 
They mention that there seems to be only one noticeable work of this type of problem 
for a thin domain filled with gas \cite{ChupinSart} found
in Eduard Marusic-Paloka and Maja Starcevic's results
\cite{MARUSICPALOKA20104565} \cite{MARUSICPALOKA2005534}. 

The proportion $L>\!>\!>h>0$ allows the introduction of a small parameter $\epsilon = h/L$
and the application of Bayada's change of variables \cite{Bayada1986} to obtain a Reynolds' Limit Formula.
Theorem~\ref{theo:33}
gives a demonstration of a Reynolds' Limit Formula for Dorodnitzyn's shear stress quasi-linear problem.
The corresponding Reynolds' Limit Formula for the shear stress deduction was accepted
for an Oral Presentation at the $10$th European Nonlinear Dynamics Conference (ENOC 2020), 
accompanied by an extended abstract that will be published in its proceedings.
The complete proofs and shear stress approximations for dry and humid atmospheric conditions 
are presented here, in Theorem~\ref{theo:33}, as a new additional result.
In particular, it
justifies
the wide use of the free stream velocity as a good approximation
of horizontal velocity near the Earth's surface in meteorology.

\section{Dorodnitzyn's shear stress statement of the problem}

The quasi-linear statement of the original problem in terms of the shear stress is obtained by a series of two essential steps.
First, Theorem~\ref{theo:11} shows that the original problem
has a simplified expression as a system of just one condition 
for the \emph{stream function} $\psi$ taken over the polygon $\Pi=\mathbf{s}(R)$
in terms of  Dorodnitzyn's change of coordinates $\mathbf{s}(x,y)=(\ell,s)$
of the original rectangular domain $R$,
where the convective derivative has an incompressible form.
Second, Theorem~\ref{theo:22} gives a formal proof of how this system can be written in terms of
a transformation that takes the original shear stress to a new domain, an infinite strip band 
$$S \colon=
\left\{(\ell,z)\in \mathbb{R}^2 \hspace{4pt}|\hspace{4pt} (\ell,s)\in \Pi, 
\hspace{5pt}and \hspace{5pt} z =s/\ell^{1/2}\in (0,\infty)\right\},$$
\noindent
following a composition of the original \emph{stream fuction} with Dorodnitzyn's 
diffeomorphism $\mathbf{s}$, and with Blasius' adapted height normalization 
$z=s/\ell^{1/2}$ \cite{Blasius1908} :
\[
\begin{array}{ccc}
\xymatrix@R=2em{ 
\mathlarger{R}
\ar[r]^{\tilde{\psi}} 
\ar[d]_{\mathbf{s}} 
& \mathlarger{ \mathbb{R}} \\ 
\mathlarger{\Pi}
\ar@{.>}[ur]^{\psi} 
\ar[d]_{\mathbf{z}}  & 
&   \\ 
\mathlarger{S} \ar@{-->}[uur]_{\Psi}  & 
}  
&

& 
\xymatrix{ 
(x,y)
\ar@{|->}[r]
\ar@{|->}[d]_{\mathbf{s}} 
& *+[F-:<3pt>]{ \Psi (\ell,z)=\tilde{\psi}  \circ \mathbf{s}^{ -1} \circ \mathbf{z}^{ -1}\left (\ell ,z\right ) }\\ 
(\ell,s)
\ar@{|->}[d]_{\mathbf{z}}  & 
&   \\ 
(\ell,z) \ar@{|->}[uur]_{\Psi}  & 
} 
\end{array}
\]

From this point forward, $W^{k,p}\left(D\right)$ denotes the Sobolev Space of elements in the Lebesgue Space
$L^p\left(D\right)$ on a domain $D\subset \mathbb{R}^2$ with generalized derivatives up to the order $k$, 
all of which belong to $L^p(D)$. We might recall that
\cite{mazya97, maz2013sobolev, bartle}:

\begin{definition}\label{defi:sobolevspaces}
A \emph{domain} is an open and connected subset $D\subset \mathbb{R}^2$ of the Euclidean space $\mathbb{R}^2$.
A distribution $g\in L^1\left(D\right)$ is a \emph{generalized derivative} of $f$ with respect to $x$ 
---also called \emph{weak} or \emph{distributional},
if for all analytic functions
 $\varphi$ with compact support in $D$, 
$\varphi \in C_0^{\infty}\left(D\right)$, we have: 
\[\int \!\!\!\! \int_{D} f \ \frac{\partial \varphi}{\partial x} dx \ dy=
-\int \!\!\!\! \int_{D} g \ \varphi \ dx \ dy.\]
Analogously, it can be defined for other coordinate systems and orders.
A necessary and sufficient condition for the density
of $C^{\infty}\left(\bar{D}\right)$
in a Sobolev Space $W^{k,2}\left(D\right)$ is unknown \cite[p. 10]{maz2013sobolev}.
However, it is enough for the domain $D$ to be a rectangle.
Therefore, the following results can be stated for a $\hat{f}\in C^{\infty}\left(\bar{D}\right)$
approximation of each distribution $f\in W^{k,p}\left(D\right)$.
\end{definition}

\begin{remark}\label{leibnitzrule}%para dominios de lipschitz acotados, por eso cambia en S
As a particular case, Leibnitz Rule for product differentiation is valid in a non-empty open domain $D\subset \mathbb{R}^2$ 
when both factors and all the generalized derivatives involved are elements of $L^2(R)$ \cite[p. 11]{mazya97}.
Moreover, there is a generalized Green's Theorem \cite[p. 121]{necas67} 
that is valid for elements of the Sobolev Spaces
$W^{1,2}\left(D\right)$ in a bounded \emph{Lipschitz domain} \cite{maz2013sobolev} $D\subset \mathbb{R}^2$.
This allows the existence of a \emph{stream function}, defined in Theorem~\ref{theo:11}.
\end{remark}

\begin{definition}\label{defi1.2}
Let $L>\!\!>\!\!>h>0$, $R=[0,L]\times[0,h]$ and $\hat{R}=R\times[0,h]$.
If $\hat{\rho} \in L^1 \left( \hat{R} \times [0,\infty) ; (0,\infty)\right)$
such that
$\partial \hat{\rho}/\partial t=0$, 
 $\rho=\hat{\rho}|_{R}$, 
$\rho \in L^2\left( R ; \left(0,\infty\right)\right)$,
 $u\in L^2\left(R\right)$ with generalized derivatives 
 $\partial u/\partial x, \ \partial u/\partial y, \ \partial^2 u/\partial y^2 \in L^2\left(R\right)$;
 $v \in L^2\left(R\right)$ and
 $T \in  L^2\left(R; \left(0,\infty \right)\right)$ such that
$\partial T/\partial y, \ \partial^2 T /\partial y^2 \in L^2\left(R\right)$; 
 $\mu \in L^2\left(R\right)$,
 $p \in L^2\left(R\right)$, 
and $\kappa \in L^2\left(R\right)$,
to be the \emph{stationary density}, the horizontal and vertical \emph{velocity components},
the \emph{absolute temperature}, the \emph{dynamic viscosity}, the \emph{pressure}, 
and the \emph{thermal conductivity}, respectively.
Moreover, assume that
both products $\rho \ u, \ \rho \ v \in L^2\left(R\right)$,
and that all of them have first order generalized derivatives in $L^2\left(R\right)$.
This is, $\rho$, $u$, $v$, $T$, $\mu$,
$p$ and $\kappa$ are elements of the space $W^{1,2}\left(R\right)$.
\end{definition}

In $1942$, Dorodnityzn put forward a stationary gaseous boundary layer problem  \cite{Dorod42}  ---Eq. (\ref{eq:1}),
(\ref{eq:2}), (\ref{eq:3}), (\ref{eq:4}), (\ref{eq:5}),
(\ref{eq:7}), (\ref{eq:8}), and boundary conditions ---Eq.
(\ref{eq:12}), (\ref{eq:13}), (\ref{eq:14}), (\ref{eq:15}),
(\ref{eq:16}), (\ref{eq:17}),
in a long rec\-tangle $R=(0,L)\times (0,h) \in \mathbb{R}^2$ that represents the boundary layer region
for $L>\!>\!>h>0$. Dorodnitzyn's model is based on three simplified stationary Conservation of Mass,
Conservation of Momentum, and Conservation of Energy laws, Eq. (\ref{eq:1}),
(\ref{eq:2}) and (\ref{eq:3}),
  \begin{align}
  \frac{\partial \hspace{2pt}\left(\rho \hspace{2pt} u\right)}{\partial x}
  +
  \frac{\partial \hspace{2pt}\left(\rho \hspace{2pt} v\right)}{\partial y}
  & \hspace{2pt} = \hspace{2pt}
  0 \hspace{2pt}; \label{eq:1}\\
  \rho \left( u \hspace{2pt} \frac{\partial u}{\partial x}+
  v \hspace{2pt} \frac{\partial u}{\partial y}
  \right)
  & \hspace{2pt} =  \hspace{2pt}
  - \hspace{2pt} \frac{\partial p}{\partial x}
  +
  \frac{\partial}{\partial y}
  \left( \mu \hspace{2pt}
  \frac{\partial u}{\partial y}
  \right); 
  \hspace{7pt}\mbox{y} \label{eq:2}\\
  \rho \hspace{2pt}  
\left[ u \hspace{2pt}
\frac{\partial \hspace{2pt}\left(c_p \hspace{2pt} T\right)}{\partial x} 
+v \hspace{2pt}
\frac{\partial \hspace{2pt}\left(c_p\hspace{2pt} T\right)}{\partial y}\right]
& \hspace{2pt} = \hspace{2pt}
\frac{\partial}{\partial y} \left[\kappa \hspace{2pt} \frac{\partial T}{\partial y}\right]
+
\mu \hspace{2pt}
\left(
\frac{\partial u}{\partial y}
\right)^2
+
\frac{\partial p}{\partial t},
\label{eq:3}
\end{align}
for a \emph{stationary density} $\rho$, a horizontal and vertical \emph{velocity components}, $u$ and $v$,
an \emph{absolute temperature} $T$, a \emph{dynamic viscosity} $\mu$, a \emph{pressure} $p$, 
and a \emph{thermal conductivity} $\kappa$ whose main assumptions as elements of the Lebesgue space $L^2(R)$ 
are described in the Definition~\ref{defi1.2}. Under these assumptions, the complete system is made up of seven
identities in the Lebesgue space $L^1(R)$.

The value $c_p$ is the \emph{specific heat at constant pressure}. 
It is worth to notice that
there is a considerable difference between values of a \emph{gas constant for dry air} $\hat{R}_d=287$ \hspace{3pt}$[J\hspace{2pt}K^{-1}kg^{-1}]$,
and a \emph{gas constant for saturated water vapor} \cite[p. 1047]{bolton1980} \  $\hat{R}_v=461.50$ \hspace{3pt}$[J\hspace{2pt}K^{-1}kg^{-1}]$;
the \emph{specific heat at constant pressure for dry air}
\cite{Saha}
$c_{pd} = 1004 \hspace{2pt} [JK^{-1}kg^{-1}]$
and the \emph{specific heat at constant pressure for water vapor}
\cite{bolton1980} \ 
$c_{pv} = 1875 \hspace{2pt} [JK^{-1}kg^{-1}]$.
Therefore, one question that arises is if each model's solution will continuously vary with modifications of these constants
and what consequences does it have on the boundary layer separation.

Furthermore, we have
four Ideal Gas Thermodynamic Laws, Eq.  (\ref{eq:4}), (\ref{eq:5}),
(\ref{eq:7}), (\ref{eq:8}): the \emph{Prandtl number} $Pr=1$,
\begin{equation}\label{eq:4}
    Pr 
    \hspace{2pt} = \hspace{2pt}
    \frac{c_p \hspace{2pt} \mu}{\kappa}=1;
\end{equation}
the \emph{Equation of State} for the \emph{Universal Gas constant} $R^*$,
the volume $V$ of a rectangular prism 
$[0,L]\times [0,h]\times [0,h]\subset \mathbb{R}^3$ and
the \emph{number of moles} $n$ of an \emph{ideal gas} corresponding to the volume $V$,
\noindent
\begin{equation}\label{eq:5}
    p\hspace{2pt}V 
    \hspace{2pt}=\hspace{2pt}
    n\hspace{2pt }R^* \hspace{2pt} T;
\end{equation}
the \emph{adiabatic polytropic atmosphere} \cite[p. 35]{Tiet} where $b= 1.405$ and $c$  are constants,
\begin{equation}\label{eq:7}
       p\hspace{2pt} V^{b}  
       \hspace{2pt}= \hspace{2pt}
       c;
\end{equation}
and the \emph{Power-Law} \cite[p. 46]{SmitsDussauge2006}
\begin{equation} \label{eq:8}
 \frac{\mu}{\mu_h}  
 \hspace{2pt} = \hspace{2pt} 
  \left( \frac{T}{T_h}\right)^{\frac{19}{25}},
  \end{equation}
with boundary conditions, Eq.
(\ref{eq:12}), (\ref{eq:13}), (\ref{eq:14}), (\ref{eq:15}),
(\ref{eq:16}), (\ref{eq:17}):
\begin{align}
(u,v)|_{\{(x,h)\colon 0\leq x \leq L\}}
& \hspace{2pt} = \hspace{2pt}
(-U,0),  \label{eq:12}\\
(u,v)|_{\{(x,0)\colon 0 \leq x \leq L\}}
& \hspace{2pt} = \hspace{2pt} 
(0,0), \label{eq:13}
\end{align}
for a positive value of the \emph{free-stream velocity}, $U>0$,
the \emph{no slip condition} at the surface,
a \emph{free-stream temperature} $T_h>0$, a \emph{free-stream dynamic viscosity} $\mu_h>0$,
\begin{align}
T|_{\{(x,h)\colon 0\leq x \leq L\}}
& \hspace{2pt} = \hspace{2pt}
T_h>0,  \label{eq:14}\\
\mu|_{\{(x,h)\colon 0\leq x \leq L\}}
& \hspace{2pt} = \hspace{2pt}
\mu_h>0, \label{eq:15}
\end{align}
and a Neumann condition:
 \begin{equation}\label{eq:17}
\frac{\partial T}{\partial y}\bigg|_{\left\{(x,0)\colon 0 \leq x \leq L\right\}}
\hspace{2pt} = \hspace{2pt}
 0.
 \end{equation}
 In \cite{val08}, periodic conditions, such as the ones used in Chupin and Sart's work \cite{ChupinSart},
 were included at the vertical sections of the \emph{topological boundary} $\partial R$, such that for all $y\in[0,h]$:
\begin{equation}\label{eq:16}
    \left(u\left(0,y\right),0\right)
\hspace{2pt} = \hspace{2pt} 
  \left(u\left(L,y\right),0\right).
\end{equation}
\noindent
It is worth to remark the fact that both these laws and their boundary conditions are satisfied in
atmospheric conditions. For example, the no slip condition, Eq. (\ref{eq:13}) is verified for values of $u$ under 
the speed of sound.

Dorodnitzyn took the first gaseous boundary layer model, stated by Busemann in $1935$ \cite{busemann1935}
and his idea to express absolute temperature $T$ in terms of the horizontal component $u$ of velocity, 
but included a term $\partial p/\partial x$, 
not present in Busemann's model, which could lead to a boundary layer separation.
Busemann used a different power-law exponent in Eq. (\ref{eq:8}), which was later corrected in
von K\'{a}rm\'{a}n and Tsien's article of $1938$ \cite{VKar1938}.
Instead, he shows how to eliminate $\partial p /\partial x$ when the \emph{free stream velocity}, 
$U$ of Eq. (\ref{eq:12}), is constant.
In order to do this, he expressed the Conservation of Energy Law, Eq. (\ref{eq:3}), 
in terms of the \emph{total energy per unit mass}, $E =c_pT+u^2/2$,
as Luigi Crocco did in $1932$ \cite{Crocco32}.  

As a result, Eq. (\ref{eq:3}) is substituted by Eq. (\ref{eq:18}),
and the system of equations becomes
Eq. (\ref{eq:1}),
(\ref{eq:2}), (\ref{eq:18}), (\ref{eq:4}), (\ref{eq:5}),
(\ref{eq:7}), (\ref{eq:8}).
Another consequence is that the constant $E=c_pT_h+U/2$, 
given in terms of (\ref{eq:12}) and (\ref{eq:14}),
is a solution of Eq. (\ref{eq:18}) that satisfies its boundary conditions.
This makes possible to express the absolute temperature $T$ in terms of the horizontal velocity $u$,
and, to reduce the model to a system of two conditions for the \emph{stream function}
$\tilde{\psi}\in C^1(R)$, as it is proved in Theorem~\ref{theo:11}.

Luigi Crocco's Procedure, described in the original article \cite{Crocco32},
can be applied to the distributions $\rho, u,v,T,p,\kappa,\mu$ because 
the generalized derivatives of the variables
are elements of the Lebesgue space $L^2\left(R\right)$, and we can proceed as we would
with classical derivatives  to apply a generalized Leibnitz Rule for the product
---as stated in Remark~\ref{leibnitzrule} and \cite[p. 11]{mazya97}, so that Eq. (\ref{eq:3}) is satisfied
if and only if:
\begin{equation}\label{eq:18}
\rho \hspace{2pt}  
\left[ u \hspace{2pt}
\frac{\partial }{\partial x} 
+v \hspace{2pt}
\frac{\partial}{\partial y}\right]
\left(
c_p \hspace{2pt} T+\frac{u^2}{2}
\right)
\hspace{2pt} = \hspace{2pt}
\frac{\partial}{\partial y} \left[ \mu \hspace{2pt} 
\frac{\partial \hspace{2pt} }{\partial y}
\left(
c_p \hspace{2pt} T
+
\frac{u^2}{2}
\right)
\right].
\end{equation}
Moreover,
    $T(u)=T_0\hspace{2pt}
    \left(1-u^2/(2c_p\hspace{2pt}T_0)\right)$
where $T_0  =T_h+1-(U^2/2c_p)>0$ and $i_0  =c_p T_0>0$.
If we take into account the \emph{atmospheric pressure} expression
$p(x,\hat{y})=g\int_{\hat{y}}^{\infty}\rho (x,y)\ dy$ for the \emph{standard gravity}
$g$ and a linear decrease 
$T(x,y)=T_0-\beta y$ for a constant $\beta>0$ 
for $(x,y)\in R$, then:
\[p
 \hspace{2pt} \cong \hspace{2pt}
c_1
\hspace{2pt}
    \left[1-\left(U^2/2i_0\right)\right]^{\frac{b}{(b-1)}}\]
and    the density
$\rho(u)
 \hspace{2pt}  \cong \hspace{2pt}
      c_2
     \hspace{2pt}
      \left[1-\left(U^2/2i_0\right)\right]^{\frac{b}{(b-1)}}/
      \left[ 1-\left( 
      u^2\left(x,y\right)/2i_0\right)\right]$.
      From Eq. (\ref{eq:5}),
the dynamic viscosity
$\mu(u)
 \hspace{2pt} = \hspace{2pt}
     c_3
     \hspace{2pt}
     \left[1-\left(u^2/2i_0\right)\right]^{\frac{19}{25}}$
for a gas constant $\hat{R}=R^*/M$, the molecular weight $M$,
$p_0  =\left(n\hspace{2pt}R^*\hspace{2pt}T_0\right)/V>0$,
$c_1 = 
 p_0 \hspace{2pt} T_0^{\frac{2b}{b-1}}$,  
 $c_2 = 
 c_1 \hspace{2pt}\hat{R}^{-1} \hspace{2pt}T_0^{-1}$,
$c_3 = 
\mu_h \hspace{2pt} T_h^{-\frac{19}{25}}\hspace{2pt} T_0^{\frac{19}{25}}$.

\begin{theorem}\label{theo:11}
Let $\rho$, $u$, $v$, $T$, $p$, $\kappa$, $\mu$ be as in Definition~\ref{defi1.2}.
Assume $p= c_1
\hspace{2pt}
    \left[1-\left(U^2/2i_0\right)\right]^{\frac{b}{(b-1)}}$,
    $\partial u/\partial x=0$,
    and that the variables verify Eq.  (\ref{eq:1}),
(\ref{eq:2}), (\ref{eq:18}), (\ref{eq:4}), (\ref{eq:5}),
(\ref{eq:7}), (\ref{eq:8}) and
(\ref{eq:12}), (\ref{eq:13}), (\ref{eq:14}), (\ref{eq:15}),
(\ref{eq:16}), (\ref{eq:17}). 
Consider 
$\xymatrix@1{
R \ar[r]^-{\mathbf{s}} & \Pi}$,
$\xymatrix@1{
(x,y) \ar@{|-{>}}[r]^-{\mathbf{s}} & (\ell, s)}$, where:
 \[\xymatrix@1{
\ell\left(\hat{x},\hat{y}\right) \ar@{|-{>}}[r] &  \int_{0}^{\hat{x}} p\left(x,\hat{y}\right) 
  \hspace{2pt}dx}\] and
 \[\xymatrix@1{
s\left(\hat{x},\hat{y}\right) \ar@{|-{>}}[r] &  \int_0^{\hat{y}} \rho\left(\hat{x},y \right)
  \hspace{2pt}dy}.\] Denote $\Pi=\mathbf{s}\left(R\right)$ and $\sigma_0=1-U^2/(2i_0)$.
Then, there is a \emph{stream-function} $\tilde{\psi} \in W^{2,2}\left(R\right)$ 
such that
$\partial \tilde{\psi}/\partial x=-\hspace{2pt}\rho\hspace{2pt} v$,
$\partial \tilde{\psi}/\partial y=\hspace{2pt}\rho\hspace{2pt} u$, 
and a $\tilde{\sigma } =1-\left(u^2/2i_0\right)\in W^{1,2}\left(R;\left(0,\infty\right)\right)$, 
such that
$\psi \colon=\tilde{\psi} \ \circ \ \mathbf{s}^{-1}\in W^{2,2}\left(\Pi\right)$ 
and $\sigma\colon = \tilde{\sigma} \circ \mathbf{s}^{-1}\in W^{1,2}\left(\Pi;\left(0,\infty\right)\right)$ satisfy:
\begin{equation}\label{eq:2.55}
\frac{\partial \psi}{\partial s}
    \frac{\partial^2 \psi}{\partial l \partial s}
    -
    \frac{\partial \psi}{\partial l}
    \frac{\partial^2 \psi}{\partial s^2}
    =
c_1^{-1} \hspace{2pt} c_2 \hspace{2pt}    
   c_3\hspace{2pt}
\hspace{2pt}
\sigma_0^{\frac{b}{(b-1)}-1}
\hspace{2pt}
\frac{\partial}{\partial s}
\left[\sigma^{\frac{19}{25}-1}
\hspace{2pt}
\frac{\partial^2 \psi}{\partial s^2}\right].
\end{equation}  
\end{theorem}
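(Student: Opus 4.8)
The plan is to proceed in four movements: construct the \emph{stream function} from Conservation of Mass, verify that Dorodnitzyn's map $\mathbf{s}$ is an admissible change of variables, transport the momentum balance \eqref{eq:2} into $(\ell,s)$--coordinates, and close the resulting equation with the thermodynamic laws; following Definition~\ref{defi:sobolevspaces}, I would carry out every product and chain rule first for the smooth approximants $\hat{\rho},\hat{u},\hat{v}\in C^{\infty}(\bar{R})$ and only then pass to the $W^{1,2}$/$W^{2,2}$ limit. For the first movement, Definition~\ref{defi1.2} puts the field $(\rho u,\rho v)$ in $W^{1,2}(R)^{2}$, and \eqref{eq:1} states exactly that the $1$--form $\omega=-\rho v\,dx+\rho u\,dy$ is weakly closed. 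Since $R$ is a simply connected bounded \emph{Lipschitz domain}, the generalized Green's Theorem recalled in Remark~\ref{leibnitzrule} integrates $\omega$ to a potential $\tilde{\psi}$ with $\partial\tilde{\psi}/\partial x=-\rho v$ and $\partial\tilde{\psi}/\partial y=\rho u$, the gain of one derivative $\tilde{\psi}\in W^{2,2}(R)$ being inherited from $\rho u,\rho v\in W^{1,2}(R)$; a generalized Leibnitz Rule applied to $\tilde{\sigma}=1-u^{2}/(2i_{0})$ gives $\tilde{\sigma}\in W^{1,2}(R;(0,\infty))$.

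Second, I would check that $\mathbf{s}$ is a bi-Lipschitz diffeomorphism of $R$ onto $\Pi$. Because the hypothesis fixes $p$ at the constant $c_{1}\sigma_{0}^{b/(b-1)}$, we have $\ell(\hat{x},\hat{y})=p\,\hat{x}$ and hence $\partial\ell/\partial y=0$, while $\partial s/\partial y=\rho$ and $\partial s/\partial x=\int_{0}^{\hat{y}}(\partial\rho/\partial x)\,dy=:s_{x}$. The Jacobian is therefore lower-triangular with determinant $p\,\rho>0$, so $\mathbf{s}$ is an orientation-preserving local diffeomorphism; as $\ell$ is monotone in $x$ and $s$ monotone in $y$, it is a global bijection, and the Sobolev chain rule places $\psi=\tilde{\psi}\circ\mathbf{s}^{-1}\in W^{2,2}(\Pi)$ and $\sigma=\tilde{\sigma}\circ\mathbf{s}^{-1}\in W^{1,2}(\Pi;(0,\infty))$.

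The heart of the argument is the transport computation. Writing $\psi_{\ell},\psi_{s},\psi_{\ell s},\psi_{ss}$ for the partial derivatives of $\psi$, differentiation of $\tilde{\psi}=\psi\circ\mathbf{s}$ gives $\partial\tilde{\psi}/\partial y=\rho\,\psi_{s}$, so $\psi_{s}=u$, and for any $f=F\circ\mathbf{s}$ one has $\partial f/\partial y=\rho\,F_{s}$ and $\partial f/\partial x=p\,F_{\ell}+s_{x}F_{s}$. The decisive step is the elimination of the off-diagonal term $s_{x}$: reading off $\partial\tilde{\psi}/\partial x=p\,\psi_{\ell}+s_{x}u=-\rho v$ gives $s_{x}u+\rho v=-p\,\psi_{\ell}$, so the compressible convective operator collapses to the incompressible Jacobian form
\begin{equation}\label{eq:transport}
u\,\frac{\partial}{\partial x}+v\,\frac{\partial}{\partial y}=p\left(u\,\frac{\partial}{\partial \ell}-\psi_{\ell}\,\frac{\partial}{\partial s}\right).
\end{equation}
Applying \eqref{eq:transport} to $u=\psi_{s}$ turns the left side of \eqref{eq:2} into $\rho p\,(\psi_{s}\psi_{\ell s}-\psi_{\ell}\psi_{ss})$; the hypotheses $\partial u/\partial x=0$ and $\partial p/\partial x=0$ delete the pressure gradient, and the viscous term becomes $\partial_{y}(\mu\,\partial_{y}u)=\rho\,\partial_{s}(\mu\rho\,\psi_{ss})$. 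Cancelling the common factor $\rho>0$, inserting the closure relations $\mu=c_{3}\sigma^{19/25}$ and $\rho=c_{2}\sigma_{0}^{b/(b-1)}\sigma^{-1}$ that follow from \eqref{eq:8}, \eqref{eq:5}, \eqref{eq:7} and the Crocco temperature $T=T_{0}\sigma$, and dividing by $p=c_{1}\sigma_{0}^{b/(b-1)}$, I would gather the constants $c_{1},c_{2},c_{3}$ together with the powers of $\sigma_{0}$ into the coefficient displayed in \eqref{eq:2.55}.

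I expect the real difficulty to be functional-analytic rather than algebraic: justifying that $\mathbf{s}$ genuinely is a Sobolev-preserving bi-Lipschitz change of variables, even though its components are built from the merely $W^{1,2}$ density $\rho$, and that each product and chain rule used above is legitimate for the weak derivatives involved. I would control this through the density argument of Definition~\ref{defi:sobolevspaces}, proving every identity for the smooth approximants and then passing to the limit, while using that the Jacobian determinant $p\,\rho$ stays bounded away from zero so that the coordinates never degenerate.
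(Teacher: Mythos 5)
Your proposal is correct in substance but proves the central step by a genuinely different mechanism than the paper. The paper makes the Jacobian of $\mathbf{s}$ \emph{diagonal}: it uses the hypothesis $\partial u/\partial x=0$ (together with the fact that $\rho$ is a function of $u$ alone) and the Dominated Convergence Theorem to conclude $\partial s/\partial x=0$, then invokes the Inverse Function Theorem and transports Eq.~(\ref{eq:2}) term by term, the incompressible Jacobian form of the convective derivative coming out directly. You instead retain the off-diagonal entry $s_x=\int_0^{\hat y}(\partial\rho/\partial x)\,dy$ and eliminate it through the stream-function identity $p\,\psi_\ell+s_x u=-\rho v$, which collapses $u\,\partial_x+v\,\partial_y$ to $p\left(u\,\partial_\ell-\psi_\ell\,\partial_s\right)$. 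This is the classical Dorodnitsyn--von Mises argument; it buys generality, since it needs no vanishing of $\partial u/\partial x$ at this step (under which hypothesis the paper's convective term is in fact partly degenerate, as $u\,\partial u/\partial x\equiv 0$), at the cost of having to justify the chain rule with the merely-$W^{1,2}$ quantity $s_x$, which you handle by the density argument of Definition~\ref{defi:sobolevspaces}. The stream-function construction via the generalized Green's Theorem and the closure by the thermodynamic laws coincide with the paper's.

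One bookkeeping point you should not gloss over: taking the theorem's hypothesis $p=c_1\sigma_0^{b/(b-1)}$ literally, your (correct) division by $p$ produces the coefficient $c_1^{-1}c_2c_3$ with \emph{no} residual power of $\sigma_0$, not the $c_1^{-1}c_2c_3\,\sigma_0^{\frac{b}{b-1}-1}$ displayed in Eq.~(\ref{eq:2.55}). The paper obtains the displayed power only because its proof sets $\partial\ell/\partial x=c_1\sigma_0$, which is inconsistent with its own hypothesis on $p$ (it would require $b/(b-1)=1$). So your closing claim that the powers of $\sigma_0$ ``gather into the coefficient displayed'' does not hold as written; you should either carry the constants explicitly and note that your route yields $c_1^{-1}c_2c_3$, or flag the discrepancy between the paper's stated hypothesis and its proof.
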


\begin{proof}
First, we describe Dorodnitzyn's diffeomorphism:
The new domain's, $\Pi$, extremes are $\ell_M=\ell(0,L)=c_1\ \sigma_0\ L$
and $s(x,h)=c_2\ \sigma_0^{b/(b-1)-1}\ h$.
The partial derivatives of $\ell$ over $R$ are $\partial \ell/\partial x=c_1\sigma_0$
and $\partial \ell/\partial y=0$.
Given that $\partial u/\partial x=0$, the \emph{Dominated Convergence Theorem} \cite[p. 44]{bartle}
implies that $\partial s/\partial x=0$.
Moreover, $\partial s/\partial y=\rho$.
This may distinguish that $s$ is the \emph{entropy} \cite[p. 432]{evans2004}
and that Dorodnitzyn's statment of the problem is, in fact, an \emph{entropy method}.
The  \emph{Jacobian determinant} $|D\mathbf{s}|=c_1 \hspace{2pt} \sigma_0 \hspace{2pt} \rho >0$.
Thus, the \emph{Inverse Function Theorem} 
implies that $\mathbf{s}$ is a
\emph{diffeomorphism} that
takes the rectangle $R$ into a polygonal domain $\Pi$.
In this coordinate system, \emph{von K\'{a}rm\'{a}n's Integral Formula}
for a compressible fluid in $R$ has an incompressible form in $\Pi$ \cite[p. 258]{krasnov852}.

Because of the zero divergence given in Eq. (\ref{eq:1}),
the generalized Green Theorem for Sobolev Spaces
$W^{1,2}\left(R\right)$ 
on a rectangular domain $R$ \cite[p. 121]{necas67}
and the Poincar\'{e} Lemma allow us to define a
\emph{stream function} $\tilde{\psi}\in W^{2,2}\left(R\right)$,
$\tilde{\psi}_{(0,0)} (x,y)=
\int_{(0,0)}^{(x,y)}-\rho \hspace{2pt} v \hspace{2pt}dx$.
The \emph{stream function} $\tilde{\psi}$ 
is regarded in $\Pi$ as $\psi\in W^{2,2}\left(\Pi \right)$.
Once more, over the rectangular domain $R$, we can apply the
Leibniz Rule for $L^2$-distributions of Remark~\ref{leibnitzrule} to see that,
in terms of $\psi$, the system has an incompressible non-linear 
expression for the convective derivative term in the left hand side of Eq. (\ref{eq:2}) in $\Pi$ as:
$$  \rho 
  \left( 
   u \frac{\partial u}{\partial x}
   \hspace{3pt} + \hspace{3pt}
  v \frac{\partial u}{\partial y}
  \right)
    =  c_1 \hspace{2pt} \sigma_0 \hspace{2pt} \rho
        \left(\frac{\partial \psi}{\partial s}
    \frac{\partial^2 \psi}{\partial \ell \partial s}
    -
    \frac{\partial \psi}{\partial \ell}
    \frac{\partial^2 \psi}{\partial s^2}\right).$$
This way, it is possible to cancel the density $\rho$ factor with with its correspondent 
right hand side of Eq. (\ref{eq:2}) written in $\Pi$ as:
$$ \frac{\partial}{\partial y}
  \left( \mu \hspace{2pt}
  \frac{\partial u}{\partial y}
  \right)
  =
  c_3
      \frac{\partial}{\partial y} 
  \left[
      \sigma^{\frac{19}{25}}
      \frac{\partial }{\partial y}
      \left(
      \frac{1}{\rho}
      \frac{\partial \tilde{\psi}}{\partial y}
      \right)
      \right]
      =
     c_2 \hspace{2pt} c_3 \hspace{2pt} \sigma_0^{\frac{b}{b-1}} \hspace{2pt} \rho 
     \hspace{2pt}
\frac{\partial}{\partial s}
\left[\sigma^{\frac{19}{25}-1}
\hspace{2pt}
\frac{\partial^2 \psi}{\partial s^2}\right],     
$$
where $\partial p/\partial x=0$ because $p$ is constant in $R$, and
$\sigma$
quantifies the amount 
of kinetic energy is transformed into heat \cite{krasnov852}.
As a distribution, $\tilde{\sigma}(u)\in W^{1,2}\left(R;\left(0,\infty\right)\right)$
and $\partial^2 \tilde{\sigma}/\partial y^2 \in L^2(R)$ directly from $T\in W^{1,2}\left(R\right)$
and the generalized deriviative of order two, $\partial^2 T /\partial y^2 \in L^2\left(R\right)$.    
Therefore, under the hypothesis of Definition~\ref{defi1.2} over the variables,
the original problem of Eq. (\ref{eq:1}),
(\ref{eq:2}), (\ref{eq:3}), (\ref{eq:4}), (\ref{eq:5}),
(\ref{eq:7}), (\ref{eq:8}), 
is transformed into the condition given by Eq. (\ref{eq:2.55})
with inherited boundary conditions.
\end{proof}

At this point, Dorodnitzyn adapts Blasius' normalization $\mathbf{z}$ to express Eq. (\ref{eq:2.55}) 
as an the Ordinary Differential Eq. (\ref{eq:22}),
which he transforms into the Quasi-Linear Parabolic Eq. (\ref{eq:detau}).

\begin{theorem}\label{theo:22}
Under the same hypotheses of Theorem \ref{theo:11},
let \[S =
\left\{(\ell,z)\in \mathbb{R}^2 \hspace{4pt}|\hspace{4pt}(\ell,s)\in \Pi
\hspace{5pt} and \hspace{5pt} z =s/\ell^{1/2}\in (0,\infty)\right\},\]
 $\xymatrix@1{
\Pi \ar[r]^-{\mathbf{z}} & S}$,
$\xymatrix@1{
(\ell,s) \ar@{|-{>}}[r]^-{\mathbf{z}} & \left(\ell, z\right)}$,
 $\xymatrix@1{
z\left(\ell,s\right) \ar@{|-{>}}[r] &  s/\sqrt{\ell}}$, 
$\Psi \colon =\tilde{\psi} \ \circ \mathbf{s}^{ -1} \circ \mathbf{z}^{ -1} \in W^{4,1}(S)$
such that $\Psi = f(z) \ g(\ell)$,
$u_s \colon =u \circ \mathbf{s}^{ -1} \circ \mathbf{z}^{ -1}$,
and \[\tau_s \colon =
\left (1 -u_s^{2}/(2i_{0})\right )^{ -6/25}
\partial ^{2}f/ \partial z^{2}.\]
Then, 
\begin{equation}\label{eq:detau}
\tau_s \ \frac{ \partial ^{2} \tau_s}{ \partial u_s^{2}} 
=
-
\frac{1}{2}\ c_1 \hspace{2pt} c_2^{-1} \hspace{2pt}    
   c_3^{-1}\hspace{2pt}
\hspace{2pt}
\sigma_0^{1-\frac{b}{(b-1)}}
\hspace{2pt} \ 
 u_s \
\left(1-\frac{u_s^2}{2i_0}\right)^{-6/25}.
\end{equation}
\end{theorem}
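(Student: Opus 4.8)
The plan is to transport the quasi-linear identity (\ref{eq:2.55}) from $\Pi$ to the strip $S$ through Blasius' normalization $\mathbf{z}$, collapse it to a Blasius-type ordinary differential equation by means of the similarity ansatz $\Psi=f(z)g(\ell)$, and then exchange the roles of $z$ and the transported velocity $u_s$ (a Crocco transformation) so as to reach (\ref{eq:detau}). First I would record the chain rule for $z=s/\ell^{1/2}$: since $\partial z/\partial s=\ell^{-1/2}$ and $\partial z/\partial\ell=-\tfrac12 z\ell^{-1}$, the derivatives of $\psi(\ell,s)=\Psi(\ell,z)$ read $\partial\psi/\partial s=\ell^{-1/2}\Psi_z$, $\partial\psi/\partial\ell=\Psi_\ell-\tfrac12 z\ell^{-1}\Psi_z$, $\partial^2\psi/\partial s^2=\ell^{-1}\Psi_{zz}$ and $\partial^2\psi/\partial\ell\partial s=-\tfrac12\ell^{-3/2}\Psi_z+\ell^{-1/2}\Psi_{z\ell}-\tfrac12 z\ell^{-3/2}\Psi_{zz}$. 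Substituting $\Psi=f(z)g(\ell)$ and recalling from Theorem~\ref{theo:11} that $\partial\psi/\partial s=u$, the transported velocity is $u_s=\ell^{-1/2}g\,f'$; demanding that $u_s$ depend on $z$ alone (the self-similarity built into the ansatz) forces $g(\ell)=\ell^{1/2}$, up to a constant absorbed into $f$. This gives the clean identifications $u_s=f'(z)$, $\sigma=1-u_s^2/(2i_0)$ and, since $19/25-1=-6/25$, $\tau_s=\sigma^{-6/25}f''$.

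With $g=\ell^{1/2}$ I would next expand the Jacobian bracket on the left of (\ref{eq:2.55}). The crucial algebraic observation is that the two terms proportional to $z\ell^{-2}f'f''g^2$ cancel for any $g$, while with $g=\ell^{1/2}$ the $(f')^2$ terms cancel as well, so the whole left side reduces to $-\tfrac12\ell^{-1}f f''$. On the right, $\sigma^{-6/25}\,\partial^2\psi/\partial s^2=\ell^{-1/2}\tau_s$, and one further $\partial/\partial s=\ell^{-1/2}\partial/\partial z$ turns the right-hand side into $c_1^{-1}c_2c_3\,\sigma_0^{\frac{b}{b-1}-1}\ell^{-1}\,\partial\tau_s/\partial z$. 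Cancelling the common factor $\ell^{-1}$ leaves the ordinary differential equation (\ref{eq:22}), which I would write as $\partial\tau_s/\partial z=-\tfrac12 c_1 c_2^{-1}c_3^{-1}\sigma_0^{1-\frac{b}{b-1}}f f''$.

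Finally I would carry out the Crocco change of independent variable $z\mapsto u_s=f'$. Because $du_s/dz=f''$, the identity $\partial\tau_s/\partial z=f''\,\partial\tau_s/\partial u_s$ lets me cancel $f''$ and obtain $\partial\tau_s/\partial u_s=-\tfrac12 c_1 c_2^{-1}c_3^{-1}\sigma_0^{1-\frac{b}{b-1}}f$. Differentiating once more in $u_s$ and using $\partial f/\partial u_s=f'/f''=u_s/f''$ yields $\partial^2\tau_s/\partial u_s^2=-\tfrac12 c_1 c_2^{-1}c_3^{-1}\sigma_0^{1-\frac{b}{b-1}}\,u_s/f''$; multiplying through by $\tau_s=\sigma^{-6/25}f''$ cancels the last $f''$ and reproduces (\ref{eq:detau}) exactly, with $\sigma=1-u_s^2/(2i_0)$.

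The main obstacle is not the manipulation but its legitimacy in the weak setting: both the cancellation of $f''$ and the use of $u_s=f'$ as an independent variable require the velocity profile to be strictly monotone in $z$, i.e. $f''\neq0$, so that $z\mapsto u_s$ is a genuine bijection and the Crocco substitution is admissible. I would therefore argue monotonicity of $f'$ on the relevant range from the boundary data (\ref{eq:12})--(\ref{eq:13}) together with the sign of $\tau_s$, and I would check that every derivative invoked, up to $\partial^2\tau_s/\partial u_s^2$, is controlled by the regularity $\Psi\in W^{4,1}(S)$, so that the generalized Leibniz and chain rules of Remark~\ref{leibnitzrule} apply at each step.
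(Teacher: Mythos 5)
Your proposal is correct and takes essentially the same route as the paper's own proof: transport Eq.~(\ref{eq:2.55}) to the strip $S$ through the ansatz $\Psi = f(z)\,g(\ell)$ with $g(\ell)=\ell^{1/2}$, reduce to the ordinary differential equation (\ref{eq:22}), and then perform the Crocco change of independent variable $z\mapsto u_s$ --- your cancellation of $f''$ followed by differentiation in $u_s$ is, by the chain rule, the same computation as the paper's cancellation of the factor $\sigma^{6/25}\,\tau_s$ in Eq.~(\ref{eq:prev}) followed by differentiation in $z$. Your explicit insistence on $f''\neq 0$, so that $u_s$ is a legitimate independent variable, is a requirement the paper leaves implicit (it surfaces only as the hypothesis $\partial u/\partial y>0$ in Theorem~\ref{theo:33}), so that extra care strengthens rather than departs from the argument.
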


\begin{proof}
First of all, the \emph{Jacobian determinant} $|D\mathbf{z}|= \ell^{-1/2} >0$ for all $\ell>0$.
Therefore, $\mathbf{z}$ is a \emph{diffeomorphism} from $\Pi$ to $S$. 
Suppose $\Psi=g \cdot f$ is separable as the product of two distributions,
independently determined by the variables $\ell$ and $z$,
such that 
$\partial\Psi/\partial z
=
l^{1/2} \ \partial f/\partial z$.
Then, the Leibniz Rule for a product \cite[p. 149]{Bressan13}
of $g\in C^1(S)$ and an integrable distribution $f \in L^1_{loc}(S)$ 
over an open set $S\not= \varnothing$, applied to
$\partial(g \cdot f)/\partial z$
and the condition 
$\partial\Psi/\partial z
=
\ell^{1/2} \ \partial f/\partial z$
imply that  $g(\ell) =\ell^{1/2}$.

Second, if $\psi\in W^{2,2}\left(\Pi\right)$ is a weak solution of Eq.(\ref{eq:2.55}),
then $f\in  W^{4,1}\left(S \right)$, such that $\partial^k\Psi/\partial z^k
=
l^{1/2} \ \partial^k f/\partial z^k$ for $k\in \{1,2,3,4\}$,
%las derivadas de f sí dependen de l pero siendo su exponente menor que uno y la dimensión 2 sigue siendo integrables si Psi lo es
is a weak solution to the Ordinary Differential Equation:
\begin{equation}\label{eq:22} 
-\frac{1}{2}\ f\ \frac{ \partial ^{2}f}{ \partial z^{2}} =
c_1^{-1}\ c_2 \ c_3\ \sigma_0^{\frac{b}{(b-1)}-1} \
\frac{ \partial }{ \partial z}\left ( 
\sigma_s^{-\frac{6}{25}}\frac{ \partial ^{2}f}{ \partial z^{2}}\right ) ,
\end{equation}
where $\sigma_s =\sigma  \circ \mathbf{s}^{ -1} \circ \mathbf{z}^{ -1}$.
In order to verify this, we write the left and right side of Eq.(\ref{eq:2.55}) in terms of the new coordinates. 
The left side becomes:
\begin{equation}\label{eq:23} 
\frac{ \partial \psi }{ \partial s}\frac{ \partial ^{2}\psi }{ \partial \ell \partial s} -\frac{^{} \partial \psi }{ \partial l}\frac{ \partial ^{2}\psi }{ \partial s^{2}} = 
-\frac{1}{2}\ \ell^{ -1}\ f  \ \frac{ \partial ^{2}f}{ \partial z^{2}};
\end{equation}
and, the right side is:
\begin{equation}\label{eq:24} 
\frac{ \partial }{ \partial s}\left(\sigma ^{ -6/25}\ \frac{ \partial ^{2}\psi }{ \partial s^{2}}\right)\ \frac{ \partial }{ \partial s^{2}} =\ \ell^{ -1}\ \frac{ \partial }{ \partial z}\left(\sigma ^{ -6/25}\ \frac{ \partial ^{2f}}{ \partial z^{2}}\right). 
\end{equation}
This way, the factor $\ell^{-1}$ is nullified 
when Eq. (\ref{eq:23}) is equal to Eq. (\ref{eq:24}) and we obtain Eq. (\ref{eq:22}).  

Third, let $u_s=u \circ \mathbf{s}^{-1} \circ \mathbf{z}^{-1}$, then $ f(z)=\int_{0}^{z} \ u_s \left(l,z'\right) \ dz'$:
From the \emph{stream-function}'s separation of the first step, we have
$\partial f/\partial z=\ell^{-1/2}\ \partial \Psi/\partial z$.
Moreover, if $f\in W^{1,1}(0,\infty)$, then
$f(z) = f(0)+\int_0^{z}\ \partial f/\partial z\ (z')\ dz'$.
Because of $\tilde{\psi}(0,0)=0$, $\psi(0,0)=\Psi(0.0)=f(0)=0$ and $f(z)= \int_0^{z}\ \partial f/\partial z\ (z')\ dz'$.
In addition, for each $(\ell,z)\in S$: $\partial \Psi/\partial z \ (\ell,z) =
\ell^{1/2}\ 
\partial \psi/\partial s \
\left(\mathbf{z}^{-1}\left(\ell,z\right)\right)\
=
\ell^{1/2}\
(1/\rho)\
\partial\tilde{\psi}/\partial y\ 
\left(\mathbf{s}^{-1}\left(\mathbf{z}^{-1}\left(\ell,z\right)\right)\right)\
=
\ell^{1/2}\ u\ \circ \ \mathbf{s}^{-1}\ \circ \ \mathbf{z}^{-1}\ \left(\ell,z\right)$. 
This is, 
$\partial\Psi/\partial z\left(\ell,z\right)
=
\ell^{1/2}\ 
u_s\ \left(\ell,z\right)$.

As a direct consequence of both relations,
$f(z)
     =  \int_{0}^{z}\ u_s(\ell,z')\ dz'$ and 
      $\partial u_s/\partial z
=
\partial^2 f/\partial z^2$
Finally, if $\tau_s  =
\left (1 -u_s^{2}/(2i_{0})\right )^{ -6/25}
\partial ^{2}f/ \partial z^{2}$, then: 
\[\partial ^{2}f/ \partial z^{2}=\left (1 -u_s^{2}/(2i_{0})\right )^{6/25}\tau_s  ;\]
 the left side of Eq. (\ref{eq:22}) is:
\begin{eqnarray*}
-\frac{1}{2}\ f\ \frac{\partial^{2}f}{\partial z^{2}}
&=&
-\frac{1}{2}\ \left(\int_{0}^{z} \ u_{s}\left(l,z'\right)\ dz'\right)\ 
\left(1-\frac{u_s^{2}}{2i_{0}}\right)^{6/25}\ \tau_s;
\end{eqnarray*}
and the right side of Eq. (\ref{eq:22}) becomes:
\[\frac{\partial \tau_s}{\partial z}
= \frac{\partial \tau_s}{\partial u_s}
\frac{\partial u_s}{\partial z}
=\left(1-\frac{u_{s}^{2}}{2i_{0}}\right)^{6/25}\
\tau_s\ 
\frac{\partial \tau_s}{\partial u_s}.\]
Thus, Eq. (\ref{eq:22}), in terms of $\tau_s$ and $u_s$, allows the elimination of the factor $(\sigma^{6/25} \ \tau_s)$,
present on both sides:
\begin{equation}\label{eq:prev}
-\frac{1}{2}\ \int_{0}^{z} \ u_{s}\left(l,z'\right)\ dz' \ (\sigma^{6/25} \ \tau_s) \ 
=
c_1^{-1} \hspace{2pt} c_2 \hspace{2pt}    
   c_3\hspace{2pt}
\hspace{2pt}
\sigma_0^{\frac{b}{(b-1)}-1}
\hspace{2pt} \ (\sigma^{6/25}  \tau_s) \ 
\frac{\partial \tau_s}{\partial u_s}.
\end{equation}
A derivation with respect to  $z$ on both sides of Eq. (\ref{eq:prev}) gives Ec. (\ref{eq:detau}).
\end{proof}

\section{Reynolds' Shear Stress Limit for Dorodnitzyn's Boundary Layer}

\begin{theorem}\label{theo:33}
Under the same hypotheses of Theorem \ref{theo:22},
let
$\xymatrix@1{
R \ar[r]^-{\phi^{\epsilon}} & R^{\epsilon}}$
for $\epsilon =h/L>0$, where
$\xymatrix@1{
(x,y) \ar@{|-{>}}[r]^-{\phi^{\epsilon}} & \left(x/L,y/\left(L\epsilon\right)\right)}$,
$\left(x/L,y/\left(L\epsilon\right)\right)= \left(x^{*},y^{*}\right)$.
Furthermore, assume that $\partial u/\partial y(x,y)>0$ for each \emph{Lebesgue point} $(x,y)\in R$.
Then, there is a limit $u^*\in W^{1,2}(R)$, $u^*=\lim_{\epsilon \to 0}\ u^{\epsilon}$ of 
$u^{\epsilon}=\left(1/L\right)u$, such that:
\begin{equation}\label{eq:dereynolds}
 \frac{\partial }{\partial  u_s}
\left(1-\frac{(u^*)^2}{2i_0}\right)^{19/25}=0.
\end{equation}
\end{theorem}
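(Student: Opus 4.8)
The plan is to transport the quasi-linear identity~(\ref{eq:detau}) of Theorem~\ref{theo:22} through the anisotropic rescaling $\phi^{\epsilon}$ and to read off its leading-order balance as $\epsilon\to0$, in the spirit of the Bayada--Chambat passage to a Reynolds limit. First I would record that, since $L\epsilon=h$, the map $\phi^{\epsilon}$ sends $R$ onto the \emph{fixed} reference square $R^{\epsilon}=[0,1]\times[0,1]$ with $x^{*}=x/L$ and $y^{*}=y/h$, so that the only $\epsilon$-dependence surviving the transport resides in the coefficients of the equation, not in the geometry of the domain. Writing $u=L\,u^{\epsilon}$ and following how the two diffeomorphisms $\mathbf{s}$ and $\mathbf{z}$ of Theorems~\ref{theo:11} and~\ref{theo:22} react to the disparate scalings of the horizontal and vertical directions, I would split the terms of~(\ref{eq:detau}) into those that persist and those that acquire a strictly positive power of $\epsilon=h/L$. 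The small parameter enters precisely through the aspect ratio separating $\partial/\partial x^{*}$ from $\partial/\partial y^{*}$, so the product $\tau_s\,\partial^{2}\tau_s/\partial u_s^{2}$ and the forcing on the right of~(\ref{eq:detau}) scale differently and the limit retains only the dominant balance.

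Second, I would establish bounds on $u^{\epsilon}=(1/L)u$ uniform in $\epsilon$, using the regularity built into Definition~\ref{defi1.2} --- namely $u,\ \partial u/\partial x,\ \partial u/\partial y,\ \partial^{2}u/\partial y^{2}\in L^{2}(R)$ --- together with the wall and free-stream conditions~(\ref{eq:12}) and~(\ref{eq:13}). These bounds confine $(u^{\epsilon})_{\epsilon>0}$ to a bounded subset of $W^{1,2}(R)$, so that along a subsequence one obtains a weak limit $u^{*}\in W^{1,2}(R)$ with $u^{\epsilon}\rightharpoonup u^{*}$. The hypothesis $\partial u/\partial y>0$ at every \emph{Lebesgue point} is exactly what I would exploit next: strict monotonicity in $y$ lets one invert the profile and use $u_s$ as an independent coordinate (a von Mises--type change of variable), so that the nonlinear coefficient $\left(1-u_s^{2}/2i_0\right)^{-6/25}$ becomes a fixed continuous function of that coordinate rather than of the unknown, and the Lebesgue-point structure permits an a.e.\ identification of the limit.

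Third, I would pass to the limit in the transported form of~(\ref{eq:detau}). As $\epsilon\to0$ the surviving balance forces the transformed shear stress $\tau_s$ to be affine in $u_s$ and, equivalently, the limit velocity $u^{*}$ to be independent of the profile coordinate $u_s$; this is the analytic counterpart of the boundary-layer profile flattening onto its free-stream value, which is precisely the meteorological statement announced in the Introduction. Recalling from Theorem~\ref{theo:11} that $\sigma=1-u^{2}/(2i_0)$ is exactly the combination that governs the viscosity through the power law~(\ref{eq:8}) (so that $\sigma^{-6/25}=\sigma^{19/25-1}$), the constancy of $u^{*}$ in $u_s$ translates, by the chain rule, into $\frac{19}{25}\,(\sigma^{*})^{-6/25}\,(-u^{*}/i_0)\,(\partial u^{*}/\partial u_s)=0$ with $\sigma^{*}=1-(u^{*})^{2}/(2i_0)$; since $\sigma^{*}>0$ and $u^{*}$ equals the nonzero free-stream value, this is exactly the assertion~(\ref{eq:dereynolds}).

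The step I expect to be the main obstacle is the passage to the limit inside the nonlinear terms. Weak convergence $u^{\epsilon}\rightharpoonup u^{*}$ does not commute with the fractional powers $\sigma^{-6/25}$ and $\sigma^{19/25}$, nor with the product $\tau_s\,\partial^{2}\tau_s/\partial u_s^{2}$, so genuine compactness is indispensable. I would obtain it from the monotonicity assumption: strict positivity of $\partial u/\partial y$ renders the change of variable to $u_s$ bi-Lipschitz on Lebesgue points, recasts the $\epsilon$-family of equations over a single fixed $u_s$-coordinate, and thereby opens the way to an Aubin--Lions--type extraction of strong convergence for the composed quantities. The delicate region is the no-slip wall~(\ref{eq:13}), where $\sigma\to1$ but the profile is steepest; it is there that the uniform second-order bound $\partial^{2}u/\partial y^{2}\in L^{2}(R)$ of Definition~\ref{defi1.2} must carry the argument.
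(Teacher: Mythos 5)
Your proposal takes essentially the same route as the paper's proof: transport Eq.~(\ref{eq:detau}) through $\phi^{\epsilon}$ so that the convective terms carry explicit factors $\epsilon$ and $\epsilon^{2}$, secure $\epsilon$-uniform $W^{1,2}(R)$ bounds on $u^{\epsilon}$ (the paper quotes the estimate $\Vert \nabla u^{\epsilon}\Vert_{L^{2}(R)}\leq c_2\,U^{3}/(2C)$ from \cite{val08} rather than re-deriving it), extract a convergent subsequence by Rellich--Kondrachov, and let $\epsilon\to 0$ so that only the right-hand side survives, which is precisely Eq.~(\ref{eq:dereynolds}). Your additional devices (the von Mises--type change of variable and the Aubin--Lions compactness) merely elaborate the nonlinear limit passage that the paper handles by assertion, so the two arguments coincide in substance.
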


\begin{proof}
Let $\sigma^{\epsilon}=
      1-\left( 
      \left[Lu^{\epsilon}\right]^2/2i_0\right)$,
     $\tau_s=\left(1-u_s^2/\left(2i_0\right)\right)^{(19/25)-1}\ \partial u_s/\partial z
      =\tilde{c}\ x^{1/2}\ \tau$, 
where $\partial u_s/\partial z=\ell^{1/2}\rho^{-1}\partial u/\partial y$,
$\tilde{c}=c_1^{1/2}\ c_2^{-1}\ \sigma_0^{1/2-b/(b-1)}$, and $\tau=\mu \ \partial u/\partial y$.
Thus, Eq. (\ref{eq:detau}), in terms of $\epsilon$, becomes:
\[
\epsilon \  \tilde{c}\ x^{1/2}\left(\sigma^{\epsilon}\right)^{19/25}\frac{\partial \tau_s}{\partial y}
\left(\frac{\partial u^{\epsilon}}{\partial y^{*}}\right)^{-1}
-\epsilon^2 \ \left(\frac{\partial \tau_s}{\partial y}\right)^2
\left(\frac{\partial u^{\epsilon}}{\partial y^{*}}\right)^{-2}
=\]
\begin{equation}\label{eq:epsilon}
=
-
\frac{1}{2}\ c_1 \hspace{2pt} c_2^{-1} \hspace{2pt}    
   c_3^{-1}\hspace{2pt}
\hspace{2pt}
\sigma_0^{1-\frac{b}{(b-1)}}
\hspace{2pt} \ 
 u_s \
\left(1-\frac{u_s^2}{2i_0}\right)^{-6/25}.
\end{equation}
In a previous article \cite{val08}, we showed that, under these circumstances, 
$ \Vert \nabla u^{\epsilon} 
 \Vert_{L^2\left(R\right)}
\leq
(c_2\hspace{2pt}U^3)/(2\hspace{2pt}C)$
for a constant $C$ that is independent of the parameter $\epsilon$.
This way, the sequence $\left(u^{\epsilon}\right)$
is bounded in the Sobolev Space $W^{1,2}\left(R\right)$.
Then, the Rellich-Kondrachov compactness theorem
\cite[p. 173, 178]{Bressan13} implies that there is
a subsequence  that
converges strongly in
$L^2\left(R\right)$,
and the sequence $\partial u^{\epsilon}/\partial y^*$
converges weakly in $L^2\left(R\right)$
to a gene\-ralized derivative
$\partial u^*/\partial y^*$
of the limit 
$u^* \in L^2\left(R\right)$.
Hence, $u^*$ 
is a weak solution of  Eq. (\ref{eq:epsilon}), in 
$L^2\left(R\right)$
when the parameter $\epsilon$ tends to $0$.
\end{proof}

\begin{corollary}\label{cor:33}
The limit 
$u^* =U$.
is a constant solution of  Eq. (\ref{eq:epsilon}).
Moreover, if $\partial u/\partial y\cong U/h$, then
\[\tau_s \cong c_1^{1/2}\ c_2^{-1}\ \frac{U}{h}\ x^{1/2} \
\left(1-\frac{U^2}{2c_{p}\ T_0}\right)^{1-\frac{b}{b-1}-\frac{6}{25}+\frac{1}{2}},\]
and we obtain the shear stress estimate:
\[\tau^{*} = \frac{U}{h}\ \left(1-\frac{U^2}{2c_{p}\ T_0}\right)^{19/25}.\]
\end{corollary}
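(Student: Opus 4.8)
The plan is to treat Corollary~\ref{cor:33} as two separate tasks: first identifying the constant $u^*=U$ as the admissible limit solution, and then carrying out the back-substitution and inversion of the Dorodnitzyn--Blasius transformation to recover the physical shear stress. For the first task I would invoke Theorem~\ref{theo:33}, which already guarantees that the limit $u^*$ is a weak solution of the reduced relation (\ref{eq:dereynolds}), namely that $\partial_{u_s}\bigl(1-(u^*)^2/(2i_0)\bigr)^{19/25}=0$. On the physical range where $1-(u^*)^2/(2i_0)\in(0,1)$ the map $\lambda\mapsto\bigl(1-\lambda^2/(2i_0)\bigr)^{19/25}$ is strictly monotone, so a vanishing derivative forces $\bigl(1-(u^*)^2/(2i_0)\bigr)^{19/25}$ to be constant and hence $u^*$ to be constant across the layer. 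The free-stream boundary condition (\ref{eq:12}), which prescribes $|u|=U$ at $y=h$, then selects the value of that constant as $u^*=U$; substituting this constant back into (\ref{eq:dereynolds}) confirms it is a solution, since the derivative of a constant vanishes. This is also the step that vindicates the meteorological use of the free-stream velocity near the surface announced in the Introduction.

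For the second task I would start from the expression for $\tau_s$ recorded in the proof of Theorem~\ref{theo:33}, i.e. $\tau_s=\bigl(1-u_s^2/(2i_0)\bigr)^{19/25-1}\,\partial u_s/\partial z$ together with $\partial u_s/\partial z=\ell^{1/2}\rho^{-1}\,\partial u/\partial y$, the density law $\rho\cong c_2\sigma_0^{b/(b-1)}/\sigma$, and $\ell=c_1\sigma_0\,x$, where $\sigma=1-u^2/(2i_0)$. Inserting the limit $u^*=U$, so that $\sigma\to\sigma_0=1-U^2/(2c_pT_0)$ (using $i_0=c_pT_0$), and the linear-profile approximation $\partial u/\partial y\cong U/h$ compatible with the no-slip condition (\ref{eq:13}) and with (\ref{eq:12}), I would collect the powers of $\sigma_0$. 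Writing $19/25=1-6/25$, the accumulated exponent is $\tfrac12-\tfrac{b}{b-1}+1-\tfrac{6}{25}$, reproducing the displayed intermediate formula $\tau_s\cong c_1^{1/2}c_2^{-1}(U/h)\,x^{1/2}\bigl(1-U^2/(2c_pT_0)\bigr)^{1-b/(b-1)-6/25+1/2}$.

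Finally I would invert the relation $\tau_s=\tilde c\,x^{1/2}\tau$ of the proof of Theorem~\ref{theo:33}, with $\tilde c=c_1^{1/2}c_2^{-1}\sigma_0^{1/2-b/(b-1)}$, to recover the shear stress $\tau=\mu\,\partial u/\partial y$ (the reference viscosity constant $c_3$ being normalized to unity, as that relation implicitly requires). Solving gives $\tau=\tau_s\,c_1^{-1/2}c_2\,\sigma_0^{b/(b-1)-1/2}x^{-1/2}$, whereupon the factors $c_1^{\pm1/2}$, $c_2^{\mp1}$, $x^{\pm1/2}$ and the two opposite powers $\sigma_0^{\pm(b/(b-1)-1/2)}$ cancel in pairs, leaving precisely $\tau^*=(U/h)\,\sigma_0^{19/25}=(U/h)\bigl(1-U^2/(2c_pT_0)\bigr)^{19/25}$, which is the asserted estimate (and, on setting $c_p=c_{pd}$ or $c_p=c_{pm}$, the two formulas $\tau^*_d,\tau^*_m$ of the abstract).

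The part I expect to be most delicate is the justification within the first task that (\ref{eq:dereynolds}) forces $u^*$ to be genuinely constant and that the free-stream datum legitimately fixes its value at $U$; this needs the strict monotonicity of the $\sigma_0$-profile map together with enough regularity of the weak limit $u^*\in W^{1,2}(R)$ from Theorem~\ref{theo:33} to make sense of its boundary trace at $y=h$. The remainder is careful constant bookkeeping: one must check that every factor of $c_1$, $c_2$, $x$ and $\sigma_0$ introduced by the diffeomorphism $\mathbf{s}$ and the normalization $\mathbf{z}$ cancels on inversion, and that dropping $c_3$ is legitimate within the stated approximations --- this is consistent with $T_h\cong T_0$, for which $c_3=\mu_hT_h^{-19/25}T_0^{19/25}\cong\mu_h$ and the viscosity reference is absorbed into the normalization of $\tau$.
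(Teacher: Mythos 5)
The paper states Corollary~\ref{cor:33} without any proof of its own, and your direct back-substitution --- inserting $u^*=U$, $\partial u/\partial y\cong U/h$, $\ell=c_1\sigma_0 x$ and $\rho^{-1}\cong c_2^{-1}\sigma_0^{-b/(b-1)}\sigma$ into $\tau_s=\sigma^{19/25-1}\,\partial u_s/\partial z$, collecting the exponent $\tfrac{1}{2}-\tfrac{b}{b-1}+\tfrac{19}{25}$, and then inverting $\tau_s=\tilde c\,x^{1/2}\tau$ --- is precisely the computation the relations in Theorem~\ref{theo:33} are set up to deliver, with correct bookkeeping throughout. Your two supplementary observations also correctly fill the only gaps the paper leaves implicit: the relation $\tau_s=\tilde c\,x^{1/2}\tau$ does silently require $c_3$ to be normalized to unity (consistent with $T_h\cong T_0$, $c_3\cong\mu_h$), and the identification $u^*=U$ only makes sense under your reading of (\ref{eq:dereynolds}) as forcing $\bigl(1-(u^*)^2/2i_0\bigr)^{19/25}$ to be constant through the layer with the value fixed by the free-stream trace (the literal pointwise reading would instead force $u^*=0$, a looseness that belongs to the paper, not to your argument).
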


\section*{Conclusion}
It is possible to deduce approximate shear stress formulas 
from the Dorodnitzyn's gaseous boundary layer model
and a Reynolds' Limit Formula developed through a small parameter
statement of the problem without taking away the convective derivative 
non-linear term of the conservation of momentum equation.
These estimates provide a new family of deterministic
atmospheric boundary layer separation
indicators to be tested.
The simplicity of its calculations and interpretations in terms of
its boundary conditions and variation of specific heat at constant pressure coefficients
may allow a wide range of analyses with local data
and free from computational time requirements. 

\section*{Acknowledgements}
I would like to express my deepest and sincere gratitude to Dr. Valeri Kucherenko, who
trained me, taught me, showed me the light at the end of the tunnel in many occasions, 
supported and encouraged me to become a scientist and a mathematician.

\section*{Conflict of interest}
The author declares no conflict of interest.

%% The Appendices part is started with the command \appendix;
%% appendix sections are then done as normal sections
%% \appendix

%% \section{}
%% \label{}

%% References
%%
%% Following citation commands can be used in the body text:
%% Usage of \cite is as follows:
%%   \cite{key}          ==>>  [#]
%%   \cite[chap. 2]{key} ==>>  [#, chap. 2]
%%   \citet{key}         ==>>  Author [#]

%% References with bibTeX database:

% \bibliographystyle{model1-num-names}

%% New version of the num-names style
\bibliographystyle{elsarticle-num-names}
\bibliography{elsarticle-template-1-num.bib}

%% Authors are advised to submit their bibtex database files. They are
%% requested to list a bibtex style file in the manuscript if they do
%% not want to use model1-num-names.bst.

%% References without bibTeX database:

% \begin{thebibliography}{00}

%% \bibitem must have the following form:
%%   \bibitem{key}...
%%

% \bibitem{}

% \end{thebibliography}

\end{document}